\newtheorem{theorem}{Theorem}[section]
\newtheorem{proposition}[theorem]{Proposition}
\newtheorem{fact}[theorem]{Fact}
\newtheorem{problem}[theorem]{Problem}
\theoremstyle{definition}
\newtheorem{definition}[theorem]{Definition}
\theoremstyle{remark}
\begin{document}

\title{Surgical distance between lens spaces}

\author{Kazuhiro Ichihara} 
\address{School of Mathematics Education, 
Nara University of Education, 
Takabatake-cho, Nara 630--8528, Japan.} 
\email{ichihara@nara-edu.ac.jp}

\author{Toshio Saito} 
\address{Graduate School of Humanities and Sciences,
Nara Women's University,
Kitauoyanishi-machi, Nara 630--8506, Japan.} 
\email{tsaito@cc.nara-wu.ac.jp} 

\keywords{Dehn surgery, hyperbolic knot, lens space}

\subjclass[2000]{Primary 57M50; Secondary 57M25}


\bigskip

\dedicatory{Dedicated to Professor Akio Kawauchi on the occasion of his 60th birthday}


\date{\today} 

\begin{abstract}
It is well-known that any pair of closed orientable 3-manifolds
are related by a finite sequence of Dehn surgeries on knots.
Furthermore Kawauchi showed that such knots can be taken to be hyperbolic.
In this article, 
we consider the minimal length of such sequences connecting
a pair of 3-manifolds, in particular, a pair of lens spaces.
\end{abstract}

\maketitle

\section{Introduction}

As a consequence of 
the famous Geometrization Conjecture raised by W.P. Thurston in \cite[section 6, question 1]{Th2}, 
all closed orientable $3$-manifolds are classified as follows: 
They should be; 
reducible (i.e., containing essential 2-spheres), 
toroidal (i.e., containing essential tori), 
Seifert fibered (i.e., foliated by circles), 
or hyperbolic manifolds 
(i.e., admitting a complete Riemannian metric with constant sectional curvature $-1$). 
Also see \cite[Problem 3.45]{Kirby}, and see \cite{Scott} for a survey. 

Now, by the celebrated Perelman's works \cite{P1, P2, P3}, 
an affirmative answer to this Geometrization Conjecture could be given. 
Beyond the classification, 
one of the next directions in the study of 3-manifolds 
would be to consider relationships between $3$-manifolds. 
One of the important operations describing such relationships 
must be \textit{Dehn surgery}. 
This is an operation to create a new $3$-manifold 
from a given one and a given knot 
(i.e., an embedded simple closed curve) 
in the following way: 
Remove an open tubular neighborhood of the knot, and glue a solid torus back. 
It gives an interesting subject to study; 
because, for instance, 
it is known that any pair of connected closed orientable $3$-manifolds 
are related by a finite sequence of Dehn surgeries on knots, 
proved by Lickorish \cite{Lick} and Wallace \cite{W} independently. 
See also Fact 1 below. 

In this article, in terms of Dehn surgery on knots, 
we introduce a \textit{distance} between pairs of $3$-manifolds.
Furthermore, by considering the surgery on \textit{hyperbolic knots}, 
another distance function is also defined, 
and we report the study of its restriction on the set of lens spaces. 

Throughout the article, for convenience, we denote 
by $\mathcal{M}$ 
the set of orientation preserving homeomorphism types 
of connected closed orientable $3$-manifolds.

\section{Backgrounds}

In this section, we will introduce some new definitions about Dehn surgery, 
and review backgrounds and known results about them. 
Also we will state a number of open problems which we will consider.

\subsection{Surgical distance}

First of all, we introduce 
a function $d : \mathcal{M} \times \mathcal{M} \to \mathbb{Z}_{\ge 0}$ 
defined as follows; 
for $  [ M ] ,  [ M' ]  \in \mathcal{M}$, 
$d( [ M ] , [ M' ] )$ is defined as the minimal length of the sequence 
$ [ M ] = [ M_0 ] , [ M_1 ] , \cdots, [ M_n ] = [ M' ] \in \mathcal{M}$ 
such that $ M_{i+1} $ is obtained from $ M_i  $ by Dehn surgery on a knot.

\smallskip

It is easy to verify that if the function $d$ is well-defined, 
then it satisfies the axiom of distance function. 
Further, as we cited above, the following is known: 

\begin{fact}[{Lickorish \cite{Lick}, Wallace \cite{W}}]
The function $d : \mathcal{M} \times \mathcal{M} \to \mathbb{Z}_{\ge 0}$ is well-defined. 
That is, 
for any pair $ [ M ] ,  [ M' ]  \in \mathcal{M}$, 
there exists a finite sequence 
$ [ M ] = [ M_0 ] , [ M_1 ] , \cdots, [ M_n ] = [ M' ] \in \mathcal{M}$ 
such that $ M_{i+1} $ is obtained from $ M_i  $ by Dehn surgery on a knot.
\end{fact}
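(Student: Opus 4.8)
The plan is to reduce the general statement to the special case $[M'] = [S^3]$; that is, to show that every $[M] \in \mathcal{M}$ can be joined to $[S^3]$ by a finite sequence of surgeries. This suffices because a single Dehn surgery is a reversible operation: if $M_{i+1}$ is obtained from $M_i$ by removing a tubular neighborhood of a knot and regluing a solid torus, then $M_i$ is recovered from $M_{i+1}$ by surgery on the core of the reglued solid torus. Hence the relation ``obtained from one another by one surgery'' is symmetric, and concatenating a sequence from $M$ to $S^3$ with one from $S^3$ to $M'$ produces a finite sequence between any prescribed pair.

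To connect an arbitrary $M$ to $S^3$, I would pass to Heegaard splittings. Fix a genus-$g$ Heegaard splitting $M = H_1 \cup_\phi H_2$, where $H_1, H_2$ are handlebodies and $\phi \colon \partial H_1 \to \partial H_2$ is the gluing homeomorphism. Since $S^3$ admits Heegaard splittings of every genus (obtained by stabilizing the genus-$0$ one), we may present $S^3 = H_1 \cup_{\phi_0} H_2$ with the same surface $\Sigma = \partial H_1$. The two manifolds then differ only through the mapping class $[\phi_0^{-1}\phi]$ of $\Sigma$, which reduces the problem to modifying the gluing map.

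The geometric heart of the argument is that altering the gluing by a single Dehn twist is realized by a single Dehn surgery. Concretely, given $M = H_1 \cup_\psi H_2$ and a simple closed curve $c \subset \Sigma$, push $c$ slightly into the interior to obtain a knot $K$, equip it with the framing induced by $\Sigma$, and perform $\pm 1$ surgery; the result is $H_1 \cup_{\psi'} H_2$, where $\psi'$ differs from $\psi$ by composition with $\tau_c^{\pm 1}$, the Dehn twist along $c$. I would establish this by a direct analysis inside a collar $\Sigma \times [-1,1]$ of the Heegaard surface, where the surgery torus regluing is seen to reparametrize the collar by exactly this twist.

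The final ingredient is Lickorish's theorem that the mapping class group of a closed orientable surface is generated by Dehn twists along finitely many simple closed curves. Writing $[\phi_0^{-1}\phi] = \tau_{c_m}^{\epsilon_m} \cdots \tau_{c_1}^{\epsilon_1}$ and applying the twist-equals-surgery step one factor at a time yields a sequence $S^3 = M_0, M_1, \dots, M_m = M$ in which each $M_j$ arises from $M_{j-1}$ by Dehn surgery on a single knot. I expect the main obstacle to be this structural input — the generation of the mapping class group by Dehn twists — together with the bookkeeping needed to keep track of how the successive surgery curves and their framings sit inside the evolving manifolds; by comparison, the reduction to $S^3$ and the collar computation are routine.
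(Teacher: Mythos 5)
Your proposal is correct and is essentially Lickorish's classical argument — reduction to $S^3$ via Heegaard splittings, the twist-equals-surgery lemma in a collar of the splitting surface, and the generation of the mapping class group by Dehn twists — which is exactly the proof the paper invokes by citing \cite{Lick}, since the paper states this as Fact 1 without giving its own argument. (Wallace's independent proof \cite{W} proceeds differently, via cobordism theory, but your route matches the Lickorish source.)
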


Remark that, in \cite{Auckly}, 
Auckly defined a similar notion; ``surgery number" of $ [ M ] \in \mathcal{M}$. 
This is equal to $d( [ S^3 ] , [ M ] )$ in our definition. 
Also see \cite[Problem 3.102]{Kirby}. 

Also remark that, by a Dehn surgery on a knot, 
the first betti number $\beta_1$ of a $3$-manifold 
can be changed only by $\pm 1$. 
So 
$d ([ M ] ,  [ M' ] ) \ge | \beta_1 (M) - \beta_1 ( M' )|$ holds 
for $ [ M ] ,  [ M' ]  \in \mathcal{M}$. 
Thus it would be natural to ask: 

\begin{problem}
For any given $N >0$, 
can we find a pair $ [ M ] ,  [ M' ]  \in\mathcal{M}$ such that 
$\beta_1 (M) = \beta_1 ( M' )$ but $d ( [ M ] ,  [ M' ] ) \ge N$? 
\end{problem}

Here we collect several related known facts: 

\begin{itemize}

\item
All lens spaces have the first betti number at most one. 
And $d ( [ L ] , [ L' ] ) = 1$ for any lens spaces $L, L'$. 
See later for definitions of lens spaces. 

\item
In \cite[Theorem 3]{GL}, 
Gordon and Luecke showed $d ( [ S^3 ] , [ M ] ) > 1$ 
if $M$ is non-prime with lens space summands. 
Thus we can obtain infinitely many $3$-manifolds $M$ 
with $\beta_1 (M) = \beta_1 (S^3) = 0$ with $d ( [ S^3 ] ,  [ M ] ) > 1$. 

\item
In \cite{Auckly}, Auckly found the first hyperbolic example 
$ [ M ] \in \mathcal{M}$ with $\beta_1 ( M ) =0$ 
such that $d ( [ S^3 ] , [ M ] ) > 1$. 

\end{itemize}

As far as the authors know, 
there are no explicit examples of pairs of manifolds 
for which the surgical distance is determined to be three or more.

\subsection{Hyperbolic surgical distance}

Next we consider Dehn surgery on \textit{hyperbolic knots}, 
that is, the knots with complements 
which admits complete hyperbolic metric of finite volume. 
In fact, we introduce 
a function $d_H : \mathcal{M} \times \mathcal{M} \to \mathbb{Z}_{\ge 0}$ 
defined as follows; 
for $  [ M ] ,  [ M' ]  \in \mathcal{M}$, 
$d_H ( [ M ] , [ M' ] )$ is defined as the minimal length of the sequence 
$ [ M ] = [ M_0 ] , [ M_1 ] , \cdots, [ M_n ] = [ M' ] \in \mathcal{M}$ 
such that $ M_{i+1} $ is obtained from $ M_i  $ by Dehn surgery on a hyperbolic knot.

One reason why we choose to consider hyperbolic knots is as follows: 
Following the classification of $3$-manifolds, 
all knots are also classified into several types. 
When one considers only knots in types of hyperbolic, 
the next was established by Kawauchi using his ``Imitation Theory''. 
See \cite{Kawauchi} for example.

\begin{fact}[Kawauchi]
$d_H : \mathcal{M} \times \mathcal{M} \to \mathbb{Z}_{\ge 0}$ is well-defined. 
That is, 
for any pair $ [ M ] ,  [ M' ]  \in \mathcal{M}$, 
there exists a finite sequence 
$ [ M ] = [ M_0 ] , [ M_1 ] , \cdots, [ M_n ] = [ M' ] \in \mathcal{M}$ 
such that $M_{i+1}$ is obtained from $M_i$ by Dehn surgery on a hyperbolic knot.
\end{fact}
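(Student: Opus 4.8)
The plan is to reduce the statement to the case of a single surgery, and then to upgrade that single surgery to a finite sequence of surgeries on hyperbolic knots by means of Kawauchi's imitation theory. First I would reduce to one surgery step using Fact 1: there is a finite sequence $[M]=[M_0],[M_1],\ldots,[M_n]=[M']$ in which each $M_{i+1}$ is obtained from $M_i$ by Dehn surgery on some (possibly non-hyperbolic) knot $K_i$. If I can show that for a single surgery from $M_i$ to $M_{i+1}$ on an arbitrary knot one has $d_H([M_i],[M_{i+1}])<\infty$, then concatenating the resulting hyperbolic-surgery sequences yields a finite sequence from $[M]$ to $[M']$, and hence $d_H([M],[M'])<\infty$. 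So it suffices to prove: if $M'$ is obtained from $M$ by one Dehn surgery on a knot $K$, then $M$ and $M'$ are joined by finitely many surgeries on hyperbolic knots.

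The main step is to realize this single surgery by hyperbolic ones. Write $E=M\setminus\mathrm{int}\,N(K)$ for the exterior, so that $M$ and $M'$ are the Dehn fillings of $E$ along the meridian and the surgery slope respectively. The plan is to apply imitation theory to the pair $(M,K)$, producing an \emph{almost identical imitation} $(M,K^{*})$: a knot $K^{*}$ together with an imitation map $q\colon (M,K^{*})\to (M,K)$ that is the identity away from a regular neighborhood of $K$ and matches the peripheral (meridian and framing) data of $K^{*}$ with that of $K$. This matching guarantees that surgery on $K^{*}$ along the slope corresponding to the original one produces a manifold homeomorphic to $M'$, so that a single step, or (allowing passage through intermediate fillings) a short sequence of steps, on the imitation realizes the change from $M$ to $M'$. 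The crucial refinement supplied by the theory is that $K^{*}$ can be taken \emph{hyperbolic}, that is, $M\setminus K^{*}$ carries a complete finite-volume hyperbolic metric; this is where the reflection construction underlying imitation theory does the work. A more hands-on alternative is to invoke Myers' theorem on excellent knots to hyperbolize $K$ inside a ball while keeping track of the induced change in the filling.

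The hard part will be the hyperbolicity of the imitation, not the surgery bookkeeping. Concretely, one must verify that the complement of $K^{*}$ contains no essential sphere, disk, torus, or annulus, so that by Thurston's geometrization of Haken manifolds it is hyperbolic; this is exactly what the reflection construction is engineered to control, and it is where the delicate estimates lie. A secondary subtlety is that a single hyperbolic surgery need not reproduce $M'$ exactly (lens-space surgeries on hyperbolic knots, for instance, are highly constrained), so the argument must be allowed to pass through one or more intermediate manifolds. Keeping the number of such intermediate steps finite, uniformly over the arbitrary input knot $K$, is precisely the point that imitation theory settles, and it is what ultimately certifies that $d_H$ is everywhere finite.
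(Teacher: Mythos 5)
There is a genuine gap, and it sits exactly at the step that carries all the mathematical content. (Note first that the paper itself offers no proof of this Fact: it is attributed to Kawauchi's imitation theory with a citation, so what you are really doing is reconstructing Kawauchi's argument.) Your outer skeleton is fine: by Fact 1 (Lickorish--Wallace) and the triangle inequality it suffices to show $d_H([M],[M'])<\infty$ whenever $M'$ is obtained from $M$ by a single surgery on an arbitrary knot $K$. The problem is your main step: you assert that an almost identical imitation $(M,K^{*})$ of $(M,K)$, because it ``matches the peripheral data,'' satisfies $\chi(M;K^{*},\gamma)\cong\chi(M;K,\gamma)=M'$, so that one hyperbolic surgery realizes the step. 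If that were true for every knot and every slope, it would give $d_H([M],[M'])=d([M],[M'])$ identically --- which is false, and is contradicted by the paper's own Fact 3 and the examples it lists: $d([S^3],[RP^3])=1$ but $d_H([S^3],[RP^3])=2$ (Kronheimer--Mrowka--Ozsv\'ath--Szab\'o), $d([S^3],[S^2\times S^1])=1$ but $d_H=2$ (Gabai), and the Fintushel--Stern quadratic-residue examples. Peripheral matching only lets you transfer the slope and yields a degree-one (imitation) map $\chi(M;K^{*},\gamma)\to\chi(M;K,\gamma)$; it does not make the two fillings homeomorphic, since the whole point is that $E(K^{*})\not\cong E(K)$. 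Arranging an imitation with \emph{the same} Dehn surgery manifold is precisely the main theorem of the cited Kawauchi paper, it requires hypotheses, and it provably cannot be achieved for a single knot with a single slope in general --- that is exactly why the value $2$ appears in Fact 3.

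Relatedly, your description of the imitation map is backwards: a normal/almost identical imitation is standard (a homeomorphism) near the link and on the peripheral tori, and is nontrivial on the exterior. If, as you write, $q$ were the identity away from a regular neighborhood of $K$ \emph{and} a homeomorphism near $K$, then $E(K^{*})\cong E(K)$ and $K^{*}$ could not be hyperbolic unless $K$ already was. Your closing hedge --- that one may need ``one or more intermediate manifolds'' and that ``imitation theory settles'' the finiteness --- concedes the failure of your main step but supplies no mechanism; it simply restates the Fact to be proved. To close the gap you would need the actual device behind Kawauchi's result: convert the single knot surgery into surgery on a colored \emph{link} (e.g.\ by adjoining an auxiliary component whose surgery returns $M$), apply the same-surgery-manifold imitation theorem in that setting to obtain a hyperbolic imitation link, and then verify that surgering its components one at a time passes only through hyperbolic knots in the intermediate manifolds (hyperbolicity of a link does not imply hyperbolicity of its individual components, nor of their images after partial filling). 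As written, the proposal assumes the essential content of the statement.
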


It is then easy to verify that this function also satisfies the axiom of distance function. 

Furthermore, Kawauchi showed the following: 

\begin{fact}[Kawauchi]
For $ [ M ] ,  [ M' ]  \in \mathcal{M}$, 
$$
d_H ( [ M ] ,  [ M' ] ) =
\left\{
\begin{array}{ll}
1 \quad \mathrm{ or } \quad 2	&	\qquad \mathrm{ if } \quad d( [ M ] , [ M' ] ) =1 \\
d( [ M ] , [ M' ] )				&	\qquad \mathrm{ otherwise }  \\
\end{array}
\right.
$$
\end{fact}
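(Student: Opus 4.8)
The plan is to prove the lower bound $d([M],[M'])\le d_H([M],[M'])$ together with a matching upper bound whose piecewise shape records the single obstruction that can appear at distance one. The lower bound is immediate: a hyperbolic knot is in particular a knot, so every sequence realizing $d_H$ is admissible for $d$. In particular $d([M],[M'])=0$ means $[M]=[M']$ and forces $d_H([M],[M'])=0$, the trivial instance of the ``otherwise'' branch. All the remaining content lies in the upper bound.

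The engine for the upper bound is Kawauchi's Imitation Theory. First I would isolate a single-step replacement lemma: if $M'$ is obtained from $M$ by Dehn surgery on a knot $K$, then one can produce a hyperbolic knot whose surgery reproduces $M'$ up to homeomorphism. The normal (almost identical) imitation of the pair $(M,K)$ supplies a knot $K^{*}$ with hyperbolic exterior such that the corresponding surgery yields a manifold homeomorphic to $M_K(\gamma)=M'$. In the favorable situation $K^{*}$ can be placed in $M$ with the homeomorphism orientation-preserving, so the step is realized by a single hyperbolic surgery and the corresponding edge survives in the hyperbolic surgery graph. The role of the lemma is to pin down when this favorable situation fails---for instance when the hyperbolic model is only available for the mirror $\overline{M'}$---and, in that exceptional case, to record that a detour of length two is always available: one performs a hyperbolic surgery from $M$ into a conveniently chosen auxiliary manifold $W$ and then a hyperbolic surgery from $W$ onto $M'$, obtained by imitating a two-component surgery presentation of the step. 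This yields unconditionally $d_H([M],[M'])\le 2$ when $d([M],[M'])=1$, which with $d_H\ge d$ gives the value $1$ or $2$ there.

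With the replacement lemma available, the ``otherwise'' branch for $d([M],[M'])=n\ge 2$ is handled by constructing a hyperbolic sequence of length exactly $n$, rather than by imitating a fixed optimal $d$-sequence edge by edge, which would only give $d_H\le 2n$. I would run an induction on $n$ in which each stage selects a hyperbolic knot in the current manifold whose surgery makes genuine progress toward $M'$, landing on a manifold whose $d$-distance to $M'$ is one smaller. Because $n\ge 2$ there is an entire family of manifolds lying one surgery closer to $M'$, and the freedom to choose which one to hit lets me always pick a target reachable by a single hyperbolic surgery, thus sidestepping the exceptional situation of the lemma. The only stage at which this freedom is absent is the last, where the target is forced to be $M'$; the extra room guaranteed by $n\ge 2$ is exactly what lets me arrange the penultimate manifold to be joined to $M'$ by a single hyperbolic surgery. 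The induction then closes with $d_H([M],[M'])\le n$, hence equality.

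The step I expect to be hardest is the single-step replacement lemma, and in particular the precise description of when a surgery cannot be realized by one hyperbolic knot surgery in $\mathcal{M}$. Exhibiting some hyperbolic knot with the prescribed surgery is exactly what imitation delivers; the difficulty is controlling orientations and, when a step must be split into two, guaranteeing that the auxiliary manifold $W$ can be chosen so that \emph{both} surgeries are on hyperbolic knots. Equally delicate is the last-step analysis for $n\ge 2$: I must show that among the manifolds a single surgery away from $M'$ there is always one that is also a single \emph{hyperbolic} surgery away, so that the distance-one obstruction never migrates into the interior of a longer sequence. Once these points are secured, the bookkeeping in the induction is routine.
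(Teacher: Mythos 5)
The paper never proves this statement: it is quoted as a Fact due to Kawauchi, with \cite{Kawauchi} as the reference, so your attempt can only be measured against the imitation-theoretic mechanism that reference supplies. Measured that way, your proposal has a genuine gap at its core. Your ``single-step replacement lemma'' --- that imitation of $(M,K)$ supplies a hyperbolic knot $K^*$ in $M$ whose surgery yields $M'$, except possibly for an orientation (mirror) issue --- is false, and the paper itself lists the counterexamples. If such a $K^*$ always existed up to mirroring, then every manifold at $d$-distance one from $S^3$ would be obtainable, with one orientation or the other, by surgery on a hyperbolic knot in $S^3$; but by Gabai \cite{Gabai} no surgery on a nontrivial knot yields $S^2\times S^1$, by \cite{KMOS} none yields $RP^3$, and by Fintushel--Stern \cite{FS} none yields $L(p,q)$ when $x^2\not\equiv\pm q \pmod p$ for all $x$. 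These obstructions are orientation-symmetric ($S^2\times S^1$ and $RP^3$ admit orientation-reversing self-homeomorphisms, and the condition $x^2\not\equiv\pm q$ is invariant under $q\mapsto -q$), so the failure is never ``the hyperbolic model is only available for the mirror''; it is a genuine topological obstruction (Property R, the cyclic surgery theorem plus homology, Floer-theoretic arguments) that no imitation can circumvent. This is precisely why the Fact has the exceptional clause at $d=1$, and your lemma, as the engine of the whole proof, cannot be repaired in the form you state it.

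The second gap is in the ``otherwise'' branch. Your induction rests on the claim that for $n\ge 2$ the ``freedom to choose'' an intermediate manifold one step closer to $M'$ always allows each step to be realized by a single hyperbolic surgery; no mechanism is offered for why such a target exists, and by the previous paragraph single-step hyperbolic realizability can genuinely fail, so this is exactly the point that needs proof. The argument the paper points to is different in shape: one first rewrites a length-$n$ surgery sequence as surgery on one $n$-component link $L$ in $M$ (each successive knot can be isotoped off the earlier surgery solid tori and hence pulled back to $M$), and then invokes Kawauchi's theorem --- whose cited title is literally about imitations of a \emph{colored link} with the \emph{same} Dehn surgery manifold --- to replace $L$ by a hyperbolic link $L^*$ in $M$ with identical surgery result, constructed so that performing the surgeries component by component passes only through hyperbolic knots. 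This gives $d_H\le n$ at once, and the $n=1$ versus $n\ge 2$ dichotomy of the Fact is exactly the dichotomy of this link argument: a single knot may have to be replaced by a two-component presentation (e.g.\ splitting the step through an auxiliary manifold), which costs one extra surgery and produces the value $1$ or $2$. Your proposal gestures at this two-component trick for $d=1$ but never forms the multi-component link for $d\ge 2$, which is where the actual work --- and the reason the doubling does not propagate to longer sequences --- resides.
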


Then it seems to be natural to ask: 

\begin{problem}\label{problem2.5} 
When can $d ( [ M ] , [ M' ] ) \ne d_H ( [ M ] , [ M' ] )$ occur?
\end{problem}

Concerning this question, there are several known facts. 
We collect them in the following. 

\begin{itemize}

\item
$d ([ S^3 ] , [ L(p,q) ] ) = 1 $ and $d_H ( [ S^3 ] , [ L(p,q) ] ) = 2$ 
if $q$ is not a quadratic residue modulo $p$; 
i.e., $ x^2 \not\equiv \pm q \mod p$ for any $x$. 
(Fintushel-Stern \cite[Proposition 1]{FS})

\item
$d_H ( [ S^3 ] , [ S^2 \times S^1 ] ) =2 $, 
while $d ( [ S^3 ] , [ S^2 \times S^1 ] ) = 1 $. (Gabai \cite{Gabai})

\item
There is a pair of lens spaces $L, L'$ 
such that  $d_H ( [ L ] , [ L' ] ) = 1$ and 
$L$ and $L'$ are orientation-reversingly homeomorphic. (Bleiler-Hodgson-Weeks \cite{BHW})
There is only one known example with such a property. 
See \cite[Problem 1.81]{Kirby} for related conjectures. 

\item
$d ( [ S^3 ] , [ L(p,q) ] ) = 1 $ and $d_H ( [ S^3 ] , [ L(p,q) ] ) = 2$ if $| p | < 9$.
In particular, 
$d ( [ S^3 ] , [ RP^3 ] ) = 1 $ and $d_H ( [ S^3 ] , [ RP^3 ] ) = 2$. 
(Kronheimer-Mrowka-Ozsv\'{a}th-Z. Szab\'{o} \cite[Theorem 1.1.]{KMOS}, Ozsv\'{a}th-Szab\'{o} \cite{OS})

\item
For the Poincar\'{e} homology sphere $P$, 
$d ( [ S^3 ] , [ P ] ) = 1 $ and $d_H ( [ S^3 ] , [ P ] ) = 2$. (Ghiggini \cite{Ghiggini})

\item
There is a sufficient condition to be $d_H ( [ S^2 \times S^1 ] , [ L ] ) = 2$ 
for a lens space $L$. (Lisca \cite{Lisca})

\end{itemize}

Please remark that 
the facts above could be obtained mainly 
from the results in the references together with many other facts.

\section{On the set of lens spaces}

In the rest of the article, we will concentrate on the set of lens spaces. 
We here call a 3-manifold $L$ with Heegaard genus at most one 
(i.e., constructed by gluing two solid tori) a \textit{lens space}. 
Thus, in this article, we say that $S^3$, $S^2 \times S^1$ and $RP^3$ are all lens spaces. 
Denote by $\mathcal{L}$ the set of orientation preserving homeomorphism types of lens spaces. 
Then note that 
$ d ( [ L ] , [ L' ] ) = 1 $ and $ d_H ( [ L ] , [ L' ] ) \le 2 $ 
for any $ [ L ] , [ L' ] \in \mathcal{L}$

\begin{definition}
For $ [ L ] , [ L' ] \in \mathcal{L}$, we set 
$d_H ( [ L ] , [ L' ] )_\mathcal{L} $ as 
the minimal length of the sequence 
$ [ L ] = [ L_0 ] , [ L_1 ] , \cdots, [ L_n ] = [ L' ] \in \mathcal{L}$ 
such that $ L_{i+1} $ is obtained from $ L_i  $ by Dehn surgery on a hyperbolic knot.
\end{definition}

\begin{problem}
Can $ d_H ( [ L ] , [ L' ] )_\mathcal{L} $ be well-defined for any $ [ L ] , [ L' ] \in \mathcal{L}$?
Equivalently, 
for any pair $ [ L ] , [ L' ] \in \mathcal{L}$, 
does there exist a finite sequence 
$ [ L ] = [ L_0 ] , [ L_1 ] , \cdots, [ L_n ] = [ L' ] \in \mathcal{L}$ 
such that $L_{i+1}$ is obtained from $L_i$ by Dehn surgery on a hyperbolic knot.
\end{problem}

Recall that: 
If $d_H ( [ L ] , [ L' ] ) =1$, then $ d_H ( [ L ] , [ L' ] )_\mathcal{L} = 1$ by definition. 
However $ d_H ( [ L ] , [ L' ] )_\mathcal{L} \ge d_H ( [ L ] , [ L'] ) =2$ in general. 

\begin{problem}\label{problem3.3}
Are there $[ L ] , [ L' ] \in \mathcal{L}$ such that 
$ d_H ( [ L ] , [ L' ] )_\mathcal{L} > d_H ( [ L ] , [ L' ] ) $?
Equivalently, 
are there $[ L ] , [ L' ] \in \mathcal{L}$ such that 
$ d_H ( [ L ] , [ L' ] )_\mathcal{L} > 2$?
\end{problem}

Usually, lens spaces are parametrized by a pair of coprime integers as follows. 
Let $V_1$ be a regular neighborhood of a trivial knot in $S^3$, 
$m$ a meridian of $V_1$ and $\ell$ a longitude of $V_1$ 
such that $\ell$ bounds a disk in $\mathrm{cl}(S^3\setminus V_1)$. 
We fix an orientation of $m$ and $\ell$ 
as illustrated in Figure \ref{FigA}. 
By attaching a solid torus $V_2$ to $V_1$ so that $\bar{m}$ is isotopic to 
a representative of  $p[\ell]+q[m]$ in $\partial V_1$, 
we obtain a lens space, 
which is denoted by $L(p,q)$, 
where $p$ and $q$ are integers with $p>0$ and $(p,q)=1$, 
and $\bar{m}$ is a meridian of $V_2$. 
It is known that two lens spaces $L(p,q)$ and $L(p',q')$ are 
(possibly orientation reversing) homeomorphic, 
i.e., $L(p,q) \cong L(p',q')$ if and only if 
$|p|=|p'|$, and $q\equiv \pm q'$ $(\bmod\ p)$ or $qq'\equiv \pm 1$ $(\bmod\ p)$.  
See \cite{R} for example.


\section{Results}

In this section, we will give our results concerning to Problems \ref{problem2.5} and \ref{problem3.3}. 

Recall that $d ( [ L ] , [ L' ] ) = 1 $ and $d_H ( [ L ] , [ L' ] ) \le  2$ for any $[ L ] , [ L' ] \in \mathcal{L}$. 
So consider the question: For which $[ L ] , [ L' ] \in \mathcal{L}$, $d_H ( [ L ] , [ L' ] ) = 1$? 

We here recall that basic terminology about Dehn surgery on knots in the $3$-sphere. 
See \cite{R} in details for example. 
As usual, by a \textit{slope}, we call an isotopy class of 
a non-trivial unoriented simple closed curve on a torus. 
Then Dehn surgery on a knot $K$ is characterized 
by the slope on the peripheral torus of $K$ 
which is represented by the simple closed curve 
identified with the meridian of the attached solid torus via the surgery. 
When $K$ is a knot in $S^3$, 
by using the standard meridian-longitude system, 
slopes on the peripheral torus are parametrized by 
rational numbers with $1/0$. 
For example, the meridian of $K$ corresponds to $1/0$ and the longitude to $0$. 
%
%
We say that 
a Dehn surgery on $K$ in $S^3$ is $p/q$-surgery if it is along the slope $p/q$. 
This means that the curve representing the slope 
runs meridionally $p$ times and longitudinally $q$ times.

Let $V$ be  a solid torus standardly embedded in $S^3$, $K_1$ the closure of an $n$-string 
braid in $V\subset S^3$, and $K_0$ a core loop of the solid torus which is the exterior of $V$ in $S^3$.  
Set $K:=K_0\cup K_1$, and let $K(p/q,r/s)$ denotes the $3$-manifold obtained 
by the $p/q$-surgery on $K_0$ and the $r/s$-surgery on $K_1$. 
In this paper, $K(p/q,-)$ (resp. $K(-,r/s)$) denotes the $3$-manifold obtained 
by the $p/q$-surgery on $K_0$ (resp. the $r/s$-surgery on $K_1$) 
and removing  an open tubular neighborhood of $K_1$ (resp. $K_0$). 

\begin{proposition}
Suppose that $K(-,r/s)\cong D^2\times S^1$. 
Then $K(p/q,r/s)\cong L(pr-(n^2s)q,xq-yp)$, where $x$ and $y$ are coprime integers 
satisfying $y(n^2s)-xr=1$. 
\end{proposition}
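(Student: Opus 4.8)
The plan is to exploit the hypothesis directly: since $K(-,r/s)\cong D^2\times S^1$, the manifold $K(p/q,r/s)$ is obtained by attaching the surgery solid torus that fills $K_0$ along slope $p/q$ to a solid torus, glued along the torus $T=\partial N(K_0)$, and a union of two solid tori along their common boundary is a lens space. Thus everything reduces to (a) identifying, as a slope on $T$, the meridian of the solid torus $W:=K(-,r/s)$, and (b) running the standard computation that recovers the lens-space parameters from the two meridian slopes, in the normalization fixed in Section 3.

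First I would set up coordinates. Let $\mu_0,\lambda_0$ be the standard meridian and $0$-framed longitude of $K_0$ on $T$, and $\mu_1,\lambda_1$ those of $K_1$, oriented so that $\langle\mu_0,\lambda_0\rangle=1$. Since $K_1$ is the closure of an $n$-braid whose axis is $K_0$, one has $\mathrm{lk}(K_0,K_1)=n$, whence in the link exterior $X:=S^3\setminus N(K_0\cup K_1)$ the longitudes satisfy $[\lambda_0]=n[\mu_1]$ and $[\lambda_1]=n[\mu_0]$ in $H_1(X)=\mathbb{Z}[\mu_0]\oplus\mathbb{Z}[\mu_1]$.

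The crucial step is to compute the meridian slope $\gamma_W$ of $W$. Here $W$ is $X$ filled along $K_1$ with slope $r\mu_1+s\lambda_1$, so $H_1(W)=\langle\mu_0,\mu_1\mid sn\,\mu_0+r\,\mu_1=0\rangle$; the hypothesis that $W$ is a solid torus forces this group to be $\mathbb{Z}$, hence $\gcd(sn,r)=1$ and in particular $\gcd(n,r)=1$. The meridian of a solid torus is the unique primitive slope on its boundary that dies in $H_1(W)$, so I would solve $a\mu_0+b\lambda_0\mapsto a\mu_0+bn\mu_1=0$; tracking the two independent appearances of $n$ — one from $[\lambda_0]=n[\mu_1]$ and one from the divisibility $n\mid k$ forced by $bn=kr$ together with $\gcd(n,r)=1$ — yields $\gamma_W=(sn^2)\mu_0+r\lambda_0$. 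I expect this to be the main obstacle: producing the factor $n^2$ rather than $n$ correctly, and checking primitivity of $\gamma_W$, which holds since $\gcd(sn^2,r)=1$.

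Finally I would glue. Write $U$ for the solid torus filling $K_0$ with meridian $p\mu_0+q\lambda_0$, so that $K(p/q,r/s)=W\cup_T U$. Choosing coprime integers $x,y$ with $y(n^2 s)-xr=1$ produces a longitude $\ell_W:=x\mu_0+y\lambda_0$ of $W$ with $\langle\gamma_W,\ell_W\rangle=1$, so $(\gamma_W,\ell_W)$ is a basis adapted to $W$ in the sense of Section 3. Expanding the meridian of $U$ in this basis, a $2\times2$ inversion (of determinant $1$ by the choice of $x,y$) gives $p\mu_0+q\lambda_0=(yp-xq)\,\gamma_W+(sn^2 q-rp)\,\ell_W$. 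Reading the coefficient of $\ell_W$ as the first lens parameter and that of $\gamma_W$ as the second, and using $L(-a,-b)\cong L(a,b)$, I conclude $K(p/q,r/s)\cong L(pr-(n^2 s)q,\,xq-yp)$. The only remaining care is to match the orientation and sign conventions of the definition of $L(p,q)$ in Section 3, after which the homological count $|H_1|=|pr-n^2 s q|$ serves as a consistency check on the first parameter.
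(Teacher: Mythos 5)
Your proof is correct, and it follows the same two-step skeleton as the paper's proof --- first identify the meridian slope of the solid torus $W=K(-,r/s)$ on the torus $\partial N(K_0)$, then read off the lens space parameters from the resulting genus-one gluing --- but where the paper disposes of both steps by citation, you prove them from scratch. The paper's proof is two lines: it quotes Gordon's Lemma 3.3(ii) to assert that the meridian of the new solid torus is the slope $r/(n^2s)$, and then quotes Lemma 3 of Bleiler--Hodgson--Weeks to convert that gluing data into $L(pr-(n^2s)q,\,xq-yp)$. Your homological argument --- the meridian of a solid torus is the unique primitive class in the kernel of $H_1(T)\to H_1(W)$; the solid-torus hypothesis forces $\gcd(sn,r)=1$, hence $\gcd(n,r)=1$; and the two appearances of $n$ (from $[\lambda_0]=n[\mu_1]$ and from $n\mid k$) produce $\gamma_W=(sn^2)\mu_0+r\lambda_0$ --- is exactly a direct proof of the relevant winding-number-$n$ case of Gordon's lemma, and your answer agrees with the paper's: in the coordinates of the solid torus $V$ (whose meridian is $\lambda_0$ and whose longitude is $\mu_0$, since $V$ is the exterior of the unknot $K_0$), your class $\gamma_W$ is precisely the slope $r/(n^2s)$. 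Likewise your $2\times 2$ inversion is the content of the BHW lemma; it checks out (the determinant is $y(n^2s)-xr=1$, and substituting back recovers $p\mu_0+q\lambda_0$), and the residual sign is correctly absorbed by $L(-a,-b)=L(a,b)$, which is legitimate both because the gluing curve is unoriented and because the paper's $\cong$ allows orientation reversal. What your route buys is self-containedness and a transparent explanation of where the $n^2$ comes from; what the paper's route buys is brevity and the generality of the cited lemmas, which handle arbitrary satellite winding numbers and package the parameter bookkeeping once and for all.
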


\begin{proof}
Since we suppose that $K(-,r/s)\cong D^2\times S^1$, 
it follows from \cite[Lemma 3.3(ii)]{G} that the meridian of the new solid torus is 
given by the slope $r/(n^2s)$. Hence the conclusion immediately follows from\cite[Lemma 3]{BHW}. 
\end{proof}

\begin{figure}[tb]\begin{center}
{\unitlength=1cm
\begin{picture}(3.15,3.15)
 \put(0,0){\includegraphics[keepaspectratio]
 {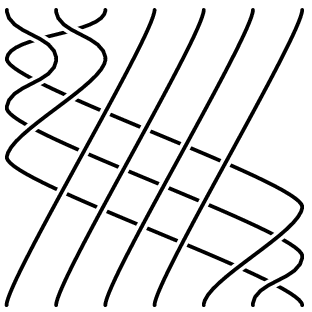}}
 \end{picture}}
 \caption{$\beta=W^{-1}_3  W^3_7$. }
\label{Fig2}
\end{center}\end{figure}

In the following of this section, let $\beta$ be the $7$-string braid and 
$K_1$ its closure in the solid torus $V$ illustrated in Figure \ref{Fig2}. 
We note that $K_1$ is denoted by $W^{-1}_3  W^3_7$ in \cite{B2}. 
It follows from \cite{B2} and \cite{BHW} that $K(-,18/1)\cong D^2\times S^1$.  
Since $K_1$ is a $7$-string braid, we see that $K(p/q , 18/1) \equiv L(18p-49q,19q-7p)$. 

\begin{proposition}\label{proposition4.2}
Let $K'_1$ be the image of $K_1$ in the lens space obtained by the $p/q$-surgery on $K_0$ with 
$p>0$.  Then there exists an integer $c>0$ such that $K'_1$ is hyperbolic in the lens space 
for any integer $q$ with $(p,q)=1$ and $|q| \ge c$. 
\end{proposition}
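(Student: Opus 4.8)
The plan is to realize the exterior of $K_1'$ in the lens space as a Dehn filling of the two--cusped link exterior $E(K)=S^3\setminus\mathrm{int}\,N(K_0\cup K_1)$, to prove that $E(K)$ is hyperbolic, and then to quote Thurston's hyperbolic Dehn surgery theorem. First I would set up the reduction. Because $K_0$ is a core of the complementary solid torus $\mathrm{cl}(S^3\setminus V)$, the $p/q$--surgery on $K_0$ rebuilds $N(K_0)$ into a solid torus while leaving $V\supset K_1$ untouched; the result is a lens space $L$ (a genuine lens space since $p>0$) containing $K_1'=K_1\subset V$. Hence the exterior $E_L(K_1')$ of $K_1'$ in $L$ is exactly $K(p/q,-)$, namely the manifold obtained from $E(K)$ by Dehn filling the cusp $\partial N(K_0)$ along the slope $p/q$, with the cusp $\partial N(K_1)$ surviving as the unique cusp of $E_L(K_1')$. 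So it suffices to prove this filling is hyperbolic whenever $(p,q)=1$ and $|q|$ is large.

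The key structural point is that $K_0$ is the braid axis of the closed braid $K_1=\widehat{\beta}\subset V$. Fibering $V=D^2\times S^1$ by meridian disks, each disk meets $K_1$ in $7$ points, so $E(K)$ is the mapping torus of $\beta$ acting on the $7$--punctured disk $D_7$: it fibers over $S^1$ with fiber $D_7$ (where $\chi(D_7)=-6<0$) and monodromy $\beta=W^{-1}_3W^3_7$. By Thurston's fibration theorem the mapping torus of a surface homeomorphism with $\chi<0$ is hyperbolic exactly when the monodromy is pseudo--Anosov, so the whole argument reduces to showing that $\beta$ is pseudo--Anosov as a mapping class of $D_7$. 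I expect this to be the main obstacle. I would establish it by running the Bestvina--Handel train--track algorithm on the word $W^{-1}_3W^3_7$ (by hand, or with standard software), producing an efficient train--track representative whose transition matrix is Perron--Frobenius with spectral radius strictly greater than $1$; this rules out the periodic and reducible cases of the Nielsen--Thurston trichotomy and certifies pseudo--Anosovness, hence the hyperbolicity of $E(K)$. (Should \cite{B2} already record that this braid is pseudo--Anosov, one could cite it instead.)

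With $E(K)$ hyperbolic, Thurston's hyperbolic Dehn surgery theorem produces a finite set $\mathcal{E}$ of slopes on $\partial N(K_0)$ such that filling along any slope outside $\mathcal{E}$ yields a hyperbolic manifold, the $K_1$--cusp remaining a finite--volume cusp. For the fixed $p>0$ the slopes $\{\,p/q:(p,q)=1\,\}$ are infinitely many distinct primitive slopes, so at most finitely many of them lie in the finite set $\mathcal{E}$. Taking $c>0$ to exceed every value of $|q|$ for which $p/q\in\mathcal{E}$, we conclude that $K(p/q,-)=E_L(K_1')$ is hyperbolic, i.e.\ $K_1'$ is a hyperbolic knot in $L$, for all $q$ with $(p,q)=1$ and $|q|\ge c$, which is the assertion of the Proposition.
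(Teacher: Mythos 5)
Your proof has one genuine gap, and it sits at the crux of the argument: you never actually establish that $\beta = W^{-1}_3W^3_7$ is pseudo-Anosov. You only propose to run the Bestvina--Handel algorithm and say you \emph{expect} an efficient train track with Perron--Frobenius transition matrix of spectral radius $>1$; if that computation came out otherwise, everything downstream collapses, so as written the proof is contingent on an unperformed calculation. Your fallback is also not available: \cite{B2} classifies the knots in $D^2\times S^1$ admitting nontrivial $D^2\times S^1$ surgeries and is not a source for the Nielsen--Thurston type of this braid --- indeed the paper itself has to go elsewhere for exactly this point. The paper closes the step with a one-line arithmetic criterion of Boyland \cite[Proposition 9.4]{Boyland}: the braid has a prime number of strands ($7$), and its exponent sum ($16$) is not a multiple of $7-1=6$, hence $\beta$ is pseudo-Anosov. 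Quoting that criterion, whose hypotheses are immediate from the braid word, would close your gap with no computation at all.

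Granting pseudo-Anosovness, the rest of your argument is correct and takes a genuinely different route from the paper's. You and the paper share the same starting point --- the exterior of $K_1$ in $V$ (equivalently $E(K)$) fibers over $S^1$ with fiber the $7$-punctured disk and monodromy $\beta$ --- but then diverge: you hyperbolize the whole two-cusped manifold by Thurston's theorem on fibered $3$-manifolds and invoke Thurston's hyperbolic Dehn surgery theorem, so that for fixed $p$ only finitely many of the infinitely many slopes $p/q$ can be exceptional, which yields $c$. The paper never hyperbolizes $E(K)$: it suspends the invariant measured foliation of the pseudo-Anosov map to an essential lamination in the fibered exterior (via \cite{GO}), reads off the degeneracy slope $\gamma = u/v$ on the filling torus, and applies Wu's theorem \cite[Theorem 2.5]{Wu}, which gives hyperbolicity of $K'_1$ whenever $\Delta(p/q,\gamma)=|pv-qu|\ge 3$. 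The trade-off is effectivity versus machinery: your route leans on the full strength of Thurston's hyperbolization and Dehn surgery theorems and produces a non-effective $c$ (the finite exceptional set is not identified), while the paper's lamination route gives an explicit linear inequality in $q$, hence in principle an explicit $c$ once the degeneracy slope is computed. Either route proves the proposition, since only the existence of $c$ is asserted.
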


\begin{proof}
Since the number of strands of the braid $\beta$ is $7$, which is a prime, 
and the exponent sum of $\beta$ is $16$, 
which is not a multiple of $7-1=6$, 
it follows from \cite[Proposition 9.4]{Boyland} that the braid $\beta$ is pseudo-Anosov. 
Let $\rho$ be the pseudo-Anosov homeomorphism of a punctured disk obtained from the braid $\beta$. 
See \cite{Boyland} for the definition of pseudo-Anosov homeomorphisms for example. 
By the definition, there exits a measured foliation $\tau$ on the punctured disk, 
which is invariant for the pseudo-Anosov homeomorphism $\varphi$ corresponding to $\beta$. 
Let $E(K_1)$ be the exterior of $K_1$ in $V$, that is, 
$E(K_1) = \mathrm{cl} ( V - N (K_1) )$, where $N (K_1)$ denotes a tubular neighborhood of $K_1$. 
By regarding this $E(K_1)$ as 
the surface bundle over the circle with monodromy $\varphi$, 
we find an essential lamination $\mathcal{L}$ in the exterior 
as a suspension of $\tau$. 
See \cite{GO} for example. 

In the complement of $\mathcal{L}$, 
we have an annulus $A$ connecting 
from a leaf of $\mathcal{L}$ to the boundary $\partial N (K_1)$, 
which comes from 
the suspension of the arc on the punctured disk 
connecting a leaf of $\tau$ to a boundary circle. 
The boundary component of $A$ on $\partial N (K_1)$ 
determines a slope, which is so-called \textit{degeneracy slope} for $\mathcal{L}$. 
Denote it by $\gamma = u/v$ with $u>0$. 

It then follows from \cite[Theorem 2.5]{Wu} 
that $K'_1$ is hyperbolic if $\Delta (p/q,\gamma)=|pv-qu| \ge 3$. 

Since $p,u$ and $v$ are constant and $u>0$, if we take an integer $q$ with $q \le pv+3$, 
we see that $pv-qu \le pv-(pv+3)u =-3u \le -3$ and hence $K'_1$ is hyperbolic. 
\end{proof}

For a given lens space $L(p,q)$ and an integer $c>0$, we can always find an integer 
$q'_0$ with $q'_0>c$ and $L(p,q'_0) \equiv L(p,q)$,  because we, if necessary,  can replace 
$q$ by $q'_0:=q + pn$ $(n\in \mathbb{Z})$. 
Moreover, there is an infinite set of integers $Q=\{q'\ |\ q'>c,\ q'=q + pn (n\in \mathbb{Z})\}$. 
Then it follows from Proposition \ref{proposition4.2} that $K'_1$ is hyperbolic in the lens space $L(p,q') \equiv L(p,q)$ 
for any $q'\in Q$. 
Since $K(p/q' , 18/1) \equiv L(18p-49q',19q'-7p)$ and $L(p,q') \equiv L(p,q)$, we have: 

\begin{theorem}
For every $[ L ] \in \mathcal{L}$, 
there exists an infinite family $[L_i] \in \mathcal{L}$ such that $d_H ( [ L ] , [ L_i ] ) = 1$ for any $i\in \mathbb{N}$. 
\end{theorem}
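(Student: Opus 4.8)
The plan is to produce the infinite family by performing one distinguished surgery, namely the $18/1$--surgery along the knot $K_1'$, inside infinitely many different surgery presentations $L(p,q')$ of the single fixed starting lens space $L$, and then to verify that the resulting manifolds are genuinely infinitely many distinct lens spaces. The point is that the homeomorphism $L(p,q+pn)\cong L(p,q)$ lets me keep $L$ fixed while pushing the surgery coefficient $q'$ arbitrarily large, which is exactly what is needed to invoke the hyperbolicity hypothesis of Proposition \ref{proposition4.2}.

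First I would write the given class as $[L]=[L(p,q)]$ with $p>0$ and $(p,q)=1$, and let $c>0$ be the constant provided by Proposition \ref{proposition4.2}. Since $L(p,q+pn)\cong L(p,q)$ for every $n\in\mathbb{Z}$, I would pass to the infinite set of coefficients
\[
Q=\{\,q'=q+pn \mid n\in\mathbb{Z},\ q'>c\,\},
\]
every element of which satisfies $(p,q')=1$ and $L(p,q')\cong L$. By Proposition \ref{proposition4.2}, for each $q'\in Q$ the image knot $K_1'\subset L(p,q')$ is then hyperbolic.

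Next, for each $q'\in Q$ I would perform the $18/1$--surgery on the hyperbolic knot $K_1'\subset L(p,q')\cong L$. By the surgery computation recorded just before the statement, this single surgery yields $K(p/q',18/1)\equiv L(18p-49q',\,19q'-7p)=:L_{q'}$, so, being a surgery on one hyperbolic knot, it gives $d_H([L],[L_{q'}])\le 1$. It remains only to see that the $L_{q'}$ fall into infinitely many distinct classes, all different from $[L]$, so that the distance is exactly $1$. The order of the first homology of $L_{q'}$ equals $|18p-49q'|$, which tends to infinity as $q'\to\infty$ through $Q$; hence these manifolds realize arbitrarily large first homology, so infinitely many of the $[L_{q'}]$ are pairwise distinct and, for $q'$ large, differ from $[L]$. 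Relabelling such an infinite subfamily as $[L_i]$ gives the statement.

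I expect no serious obstacle in this final assembly: the genuine content has already been isolated in Proposition \ref{proposition4.2}, whose pseudo-Anosov and essential-lamination input guarantees hyperbolicity of $K_1'$ for all but finitely many $q$. The only point that requires a moment's care is the distinctness claim, but under the classification of lens spaces recalled in Section~3 it reduces to the elementary observation that $q'\mapsto|18p-49q'|$ is unbounded on $Q$, which forces infinitely many distinct first-homology orders and in particular $[L_{q'}]\neq[L]$ for all large $q'$.
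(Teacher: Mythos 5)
Your proposal is correct and follows essentially the same route as the paper: rewrite $[L]=[L(p,q)]$ with $q$ replaced by coefficients $q'=q+pn>c$ so that Proposition \ref{proposition4.2} makes $K'_1$ hyperbolic in $L(p,q')\cong L$, then apply the computation $K(p/q',18/1)\equiv L(18p-49q',19q'-7p)$ to get the family. Your explicit check that the resulting lens spaces are pairwise distinct and differ from $[L]$ (via the unboundedness of $|18p-49q'|$) is a point the paper leaves implicit, and it is a worthwhile addition since it is what upgrades $d_H\le 1$ to $d_H=1$ and guarantees the family is infinite.
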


Using arguments similar to the above, we also have: 

\begin{theorem}
For every $p \in \mathbb{N}$, 
there exist two pairs of coprime integers $(r,s)$ and $(r',s')$ such that 
$d_H ( [ L(r,s) ] , [ L(r',s') ] ) = 1$ and $| r -r' |=p$. 
\end{theorem}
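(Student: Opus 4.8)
The plan is to exhibit the required pair of lens spaces as the two ends of a single hyperbolic surgery coming from the construction developed above. Concretely, for coprime integers $a,b$ with $a>0$ I would take $(r,s)=(a,b)$, so that $L(r,s)$ is obtained by $a/b$-surgery on $K_0$, and $(r',s')=(18a-49b,\,19b-7a)$, which by the computation recorded above is exactly $K(a/b,18/1)$. If the image $K_1'$ of $K_1$ in $L(a,b)$ is hyperbolic, this single surgery shows $d_H([L(r,s)],[L(r',s')])\le 1$. Three bookkeeping facts make this work: the pair $(r',s')$ is again coprime because it is the image of $(a,b)$ under the matrix $\begin{pmatrix}18&-49\\-7&19\end{pmatrix}$, whose determinant is $18\cdot 19-49\cdot 7=-1$; one has $r'=18a-49b>0$ as soon as $17a-49b>0$; and $|r-r'|=|49b-17a|$. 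So the whole theorem reduces to solving $17a-49b=p$ in coprime integers with $a>0$ and with $K_1'$ hyperbolic, since then $|r-r'|=p$, the two first parameters differ, hence the lens spaces are distinct and $d_H=1$.

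Next I would solve the Diophantine side. Since $\gcd(17,49)=1$, the equation $17a-49b=p$ has solutions, with general solution $a=a_0+49t$, $b=b_0+17t$ for $t\in\mathbb{Z}$; note $18a-49b=a+p>0$ automatically once $a>0$. Any common prime divisor $\ell$ of $a$ and $b$ divides $17a-49b=p$, so coprimality can only fail at primes dividing $p$. For each such $\ell$ the residues of $a$ and $b$ along the progression are controlled: when $\ell\notin\{7,17\}$ one arranges $\ell\nmid a$ (whence $\ell\nmid b$), and when $\ell=7$ or $17$ one of $a,b$ is forced divisible by $\ell$ while the other varies freely modulo $\ell$, so the quotient can be kept coprime. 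By the Chinese Remainder Theorem these finitely many congruence conditions on $t$ are simultaneously satisfiable, leaving an infinite arithmetic progression of admissible $t$; taking $t$ large gives $a>0$.

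The main obstacle is hyperbolicity of $K_1'$ along this family, because I cannot simply quote Proposition \ref{proposition4.2}: there the surgery coefficient is fixed and the other parameter is sent to infinity, whereas here both $a$ and $b$ grow together along the solution line. Instead I would use the criterion extracted in the proof of Proposition \ref{proposition4.2}, namely that $K_1'$ is hyperbolic once $\Delta(a/b,\gamma)=|av-bu|\ge 3$, where $\gamma=u/v$ (with $u>0$) is the fixed degeneracy slope of the suspension lamination. Substituting $a=a_0+49t$, $b=b_0+17t$ gives $av-bu=(49v-17u)t+(a_0v-b_0u)$, which tends to infinity in absolute value provided $49v-17u\ne 0$, i.e. provided the degeneracy slope $\gamma$ is not $49/17$. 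Thus the real point to verify is that $\gamma\ne 49/17$; granting this, all sufficiently large admissible $t$ produce a hyperbolic $K_1'$, completing the construction. I expect this degeneracy-slope check, together with the coprimality bookkeeping, to be the only genuine content, the remainder being the same argument as in the preceding theorem.
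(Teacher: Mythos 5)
Your bookkeeping is sound as far as it goes: with $(r,s)=(a,b)$ and $(r',s')=(18a-49b,\,19b-7a)$, the unimodularity check $18\cdot 19-49\cdot 7=-1$, the positivity $r'=a+p>0$, the identity $|r-r'|=|49b-17a|$, and the Chinese Remainder argument producing coprime solutions $a=a_0+49t$, $b=b_0+17t$ of $17a-49b=p$ are all correct. But the proof has a genuine gap exactly where you flag it: hyperbolicity of $K_1'$ in $L(a,b)$ along your family. Since $a$ and $b$ grow together along the solution line, Proposition \ref{proposition4.2} (which fixes $p$ and lets $|q|$ grow) indeed does not apply, and your substitute criterion $|av-bu|\ge 3$ only takes effect for large $t$ if $49v-17u\ne 0$, i.e.\ if the degeneracy slope $\gamma=u/v$ is not $49/17$. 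Nothing in the paper, and nothing in your argument, identifies $\gamma$ for the braid $W_3^{-1}W_7^3$; computing it would require explicit pseudo-Anosov data (the boundary prongs of the invariant foliation and their behavior under the monodromy) that is nowhere available. A construction whose every instance rests on an acknowledged, unverified hypothesis is not a proof, and there is no cheap way to rule out the single bad value $\gamma=49/17$ from general principles.

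The paper's own proof avoids this issue entirely, and the input it uses that you did not is that Berge's knot $K_1$ admits \emph{two} nontrivial solid-torus surgeries: $K(-,18/1)\cong D^2\times S^1$ and also $K(-,19/1)\cong D^2\times S^1$. Consequently, for fixed $p$ and suitable $q'$, the single hyperbolic knot $K_1'\subset L(p,q')$ (equivalently, its dual knot in one of the surgered manifolds) connects by one Dehn surgery the two lens spaces $K(p/q',18/1)\equiv L(18p-49q',\,19q'-7p)$ and $K(p/q',19/1)\equiv L(19p-49q',\,18q'-7p)$, whose first parameters differ by exactly $p$. Here $p$ stays fixed and only $q'$ is taken large, so Proposition \ref{proposition4.2} applies verbatim, no Diophantine analysis is needed, and no degeneracy-slope computation arises. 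To salvage your route you would have to actually compute $\gamma$; otherwise the two-filling trick is the way to close the argument.
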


\begin{proof}
We use the same link $K=K_0\cup K_1$ as above, and let $K'_1$ be again the image of $K_1$ 
in the lens space obtained by the $p/q$-surgery on $K_0$ with $p>0$. 
Then it follows from \cite{B2} that $K(-,19/1)\cong D^2\times S^1$. Also see \cite{BHW}. 
Hence we see that $K(p/q , 19/1) \equiv L( 19 p - 49 q , 18 q - 7 p)$.  
As mentioned above, $K(p/q , 18/1) \equiv L(18p-49q,19q-7p)$. 
This implies that the dual knot of $K'_1$ in $L(19p-49q,18q-7p)$ admits a Dehn surgery 
yielding $L(18p-49q,19q-7p)$.  
Hence we have 

\begin{center}
$d ( [L( 18 p - 49 q , 19 q - 7 p)]  , [L( 19 p - 49 q , 18 q - 7 p)] ) = 1$.
\end{center}

Moreover, retaking $q$ with an appropriate integer $q'$ with $q'>c$, we see that 

\begin{center}
$d_H ( [L( 18 p - 49 q' , 19 q' - 7 p)]  , [L( 19 p - 49 q' , 18 q' - 7 p)] ) = 1$.
\end{center}

Setting $(r,s)=(18p-49q',19q'-7p)$ and 
$(r',s')=(19p-49q',18q'-7p)$, we obtain the desired conclusion. 
\end{proof}


\section{Sample calculations}

Recall that 
if $d_H ( [ L ] , [ L' ] ) = 1$, then $ d_H ( [ L ] , [ L' ] )_\mathcal{L} = 1$ by definition; 
however, if $d_H ( [ L ] , [ L' ] ) = 2$, then $ d_H ( [ L ] , [ L' ] )_\mathcal{L} \ge 2$ in general. 
Consider the question: 
For which $[ L ] , [ L' ] \in \mathcal{L}$, $d_H ( [ L ] , [ L' ] ) = d_H ( [ L ] , [ L' ] )_\mathcal{L} = 2$. 
In this section,  we give some examples concerning this question. 

In the following, the link $K:=K_0\cup K_1$ illustrated in Figure \ref{Fig1} plays an important role. 
We note that $K$ is introduced by Yamada and is denoted by $k(3,5) \cup u$ in \cite{Yamada}. 

\begin{figure}[tb]\begin{center}
{\unitlength=1cm
\begin{picture}(5.6,4.6)
 \put(0,0){\includegraphics[keepaspectratio]
 {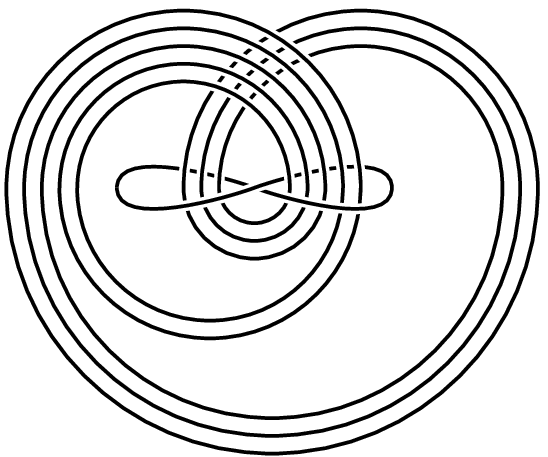}}
  \put(4.02,2.65){$K_0$}
  \put(5.5,2.65){$K_1$}
\end{picture}}
\caption{$K:=K_0\cup K_1$. }
\label{Fig1}
\end{center}\end{figure}

\subsection{\mbox{\boldmath $d_H ( [ S^3 ] , [ S^2 \times S^1 ] ) = d_H ( [ S^3 ] , [ S^2 \times S^1 ] )_\mathcal{L} = 2$}}

By an argument similar to that in \cite{Yamada}, we have $K(r/1,15/1)\equiv L(64-15r,23-5r)$. 
This implies that $K(0/1,15/1)\equiv L(64,23)$ and hence 
$d([S^2 \times S^1],[L(64,23)])=1$. Let $K'_1$ be the image of $K_1$ in $S^2 \times S^1$ 
which is obtained by the $0/1$-surgery on $K_0$. 
Then it is verified by using computer program SnapPea \cite{SnapPea} that 
$K'_1$ is hyperbolic in $S^2 \times S^1$. Hence $d_H ([S^2 \times S^1],[L(64,39)])=1$. 

On the other hand, we have $d_H ([S^3 ],[L(64,23)])=1$ as follows. 
Let $K''$ be the knot in $L(64,23)$ denoted by $K(L(64,23);19)$ (see the appendix for the definition). 
Then we see that $K''$ admits a Dehn surgery yielding $S^3$. Moreover, it follows from 
\cite[Theorem 1.3]{Saito2} that $K''$ is hyperbolic in $L(64,23)$ (see the appendix for detail). 

\subsection{\mbox{\boldmath $d_H ( [ S^3 ] , [ RP^3 ] ) = d_H ( [ S^3 ] , [ RP^3 ] )_\mathcal{L} = 2$}}

In the same way as above, 
we have that $K(2/1,15/1)\equiv L(34,13)$ and hence $d([RP^3],[L(34,13)])=1$. 
Let $K'_1$ be the image of $K_1$ in $RP^3$ 
which is obtained by the $2/1$-surgery on $K_0$. 

Again it is verified by using computer program SnapPea \cite{SnapPea} that 
$K'_1$ is hyperbolic in $RP^3$. Hence $d_H ([RP^3],[L(34,13)])=1$. 

On the other hand, we see $d_H ([S^3 ],[L(34,13)])=1$ as follows. 

Let $K''$ be the knot in $L(34,13)$ denoted by $K(L(34,13);9)$. 
Then we see that $K''$ admits a Dehn surgery yielding $S^3$. Moreover, it follows from 
\cite[Theorem 1.3]{Saito2} that $K''$ is hyperbolic in $L(34,13)$. 


\section*{Acknowledgments}

The first author is partially supported by 
Grant-in-Aid for Young Scientists (B), No.20740039, 
Ministry of Education, Culture, Sports, Science and Technology, Japan. 
The second author is partially supported by 
JSPS Research Fellowships for Young Scientists.

The authors would like to thank 
Akio Kawauchi for teaching them about his imitation theory, 
for example, in \cite{Kawauchi}. 
They also thank Yukihiro Tsutsumi, Yuichi Yamada and Eiko Kin for useful discussion.


\bibliographystyle{amsplain}


\appendix
\section{Definition and properties of $K(L(p,q);u)$}
Recall the definition and the parametrization of lens spaces as follows. 
Let $V_1$ be a regular neighborhood of a trivial knot in $S^3$, 
$m$ a meridian of $V_1$ and $\ell$ a longitude of $V_1$ 
such that $\ell$ bounds a disk in $\mathrm{cl}(S^3\setminus V_1)$. We fix an orientation of $m$ and $\ell$ 
as illustrated in Figure \ref{FigA}. 
By attaching a solid torus $V_2$ to $V_1$ so that $\bar{m}$ is isotopic to 
a representative of  $p[\ell]+q[m]$ in $\partial V_1$, 
we obtain a lens space $L(p,q)$, where $p$ and $q$ are integers with $p>0$ and $(p,q)=1$, 
and $\bar{m}$ is a meridian of $V_2$. 
Then the intersection points of $m$ and $\bar{m}$ are labeled $P_0,\ldots ,P_{p-1}$ successively 
along the positive direction of $m$. Let $t^u_i$ $(i=1,2)$ be a simple arc in $D_i$ joining $P_0$ to $P_u$ 
$(u=1,2,\dots,p-1)$. 
Then the notation $K(L(p,q);u)$ denotes the knot $t^u_1\cup t^u_2$ in $L(p,q)$ 
(\textit{cf}. Figure \ref{FigA}). 

\begin{figure}[tb]\begin{center}
{\unitlength=1cm
\begin{picture}(12,5.7)
 \put(2.41,0){\includegraphics[keepaspectratio]
 {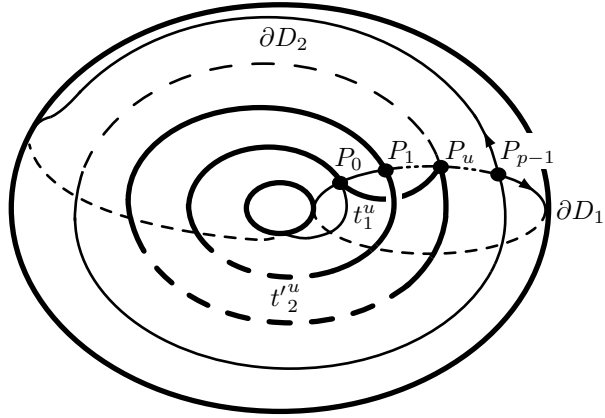}}
  \put(5.75,4.9){$\partial D_2$}
  \put(6.75,3.3){$P_0$}
  \put(7.45,3.4){$P_1$}
  \put(8.22,3.4){$P_u$}
  \put(8.95,3.4){$P_{p-1}$}
  \put(7,2.55){$t^u_1$}
  \put(9.7,2.6){$\partial D_1$}
  \put(5.9,1.45){${t'}^u_2$}
\end{picture}}
\caption{Here, ${t'}^u_2$ is a projection of $t^u_2$ on $\partial V_1$. }
\label{FigA}
\end{center}\end{figure}

We then prepare the following notations. 
Let $p$ and $q$ be integers with $p>0$ and $(p,q)=1$. 
Let $\{s_j\}_{1\le j\le p}$ be the finite sequence, which we call the \textit{basic sequence}, 
such that $0\le s_j< p$ and $s_j\equiv jq$ $(\bmod \ p)$. 
For an integer $u$ with $0<u<p$, $\Psi_{p,q} (u)$ denotes the integer 
satisfying $\Psi_{p,q} (u)\cdot q\equiv u$ $(\bmod \ p)$ and 
$\Phi_{p,q} (u)$ denotes the number of elements 
of the following set (possibly, the empty set):
\[
\{s_j\ |\ 1\le j< \Psi_{p,q} (u),\ s_j< u\}. 
\]
Also, $\widetilde{\Phi}_{p,q} (u)$ denotes the following: 
\[
\widetilde{\Phi}_{p,q} (u)=
\min \left\{
\begin{array}{l}
\Phi_{p,q} (u),\ \Phi_{p,q} (u)-\Psi_{p,q} (u)+p-u,\\
\ \ \Psi_{p,q} (u)-\Phi_{p,q} (u)-1,\ u-\Phi_{p,q} (u)-1
\end{array}
\right\}.
\]

\begin{figure}[tb]\begin{center}
{\unitlength=1cm
\begin{picture}(11.136,4.8)
 \put(0,0){\includegraphics[keepaspectratio]
 {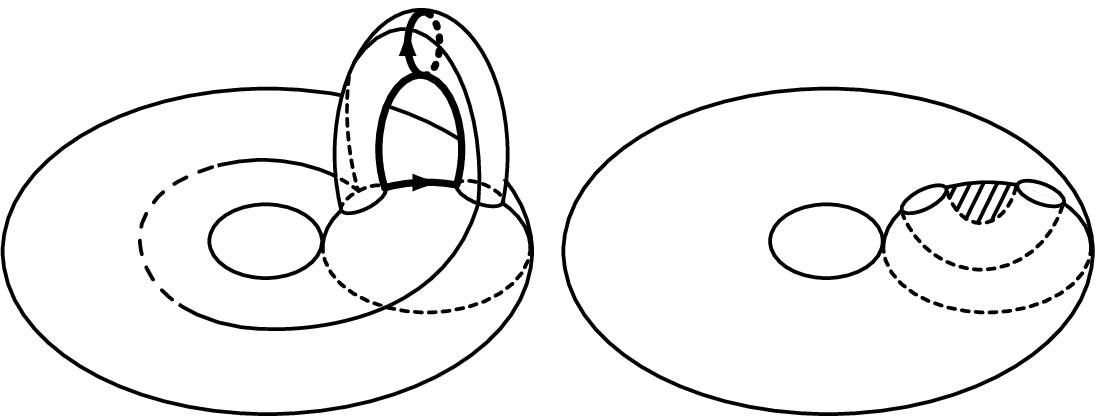}}
  \put(4.1,4.3){$m'$}
  \put(4.2,2.5){$\ell'$}
  \put(0,0){$V'_1$}
  \put(5.75,0){$V'_2$}
  \put(2.5,.5){$\partial D_2'$}
  \put(9.8,2.6){$D_2'$}
\end{picture}}
\caption{}
\label{FigB}
\end{center}\end{figure}

Set $V_1'=V_1\cup \eta(t_2^u;V_2)$, $V_2'=\mathrm{cl}(V_2\setminus \eta(t_2^u;V_2))$ and 
$S'=\partial V_1'=\partial V_2'$. Then $(V_1',V_2';S')$ is a genus two Heegaard splitting of $L(p,q)$. 
Let $D_2'\subset (D_2\cap V_2')$ be a meridian disk of $V_2'$ with $\partial D_2' \supset ({t'}_2^u \cap S')$. 
Let $m'$ be a meridian of $K=t^u_1\cup t^u_2$ in the annulus $S'\cap \partial \eta(t_2^u;V_2)$. 
Let $\ell'$ be an essential loop in $S'$ which is a union of ${t'}^u_1 \cap S'$ and 
an essential arc in the annulus $S'\cap \partial \eta(t_2^u;V_2)$ disjoint from $\partial D_2'$ 

Let $m^{\ast}$ be a meridian of $K$ in $\partial \eta(K;V_1')$ 
and $\ell^{\ast}$ a longitude of $\partial \eta(K;V_1')$ such that $\ell'\cup \ell^{\ast}$ bounds 
an annulus in $\mathrm{cl}(V_1'\setminus \eta(K;V_1'))$. The loops $m^{\ast}$ and $\ell^{\ast}$ are 
oriented as illustrated in Figure \ref{FigB}. Then $\{[m^{\ast}],[\ell^{\ast}]\}$ is  
a basis of $H_1(\partial \eta(K;V_1');\mathbb{Z})$. Let $V_1''$ be a genus two handlebody obtained 
from $\mathrm{cl}(V_1'\setminus \eta(K;V_1'))$ by attaching a solid torus $\bar{V}$ so that the 
boundary of a meridian disk $\bar{D}$ of $\bar{V}$ is identified with a loop represented by 
$r[m^{\ast}]+s[\ell^{\ast}]$. Set $M'=V_1''\cup_{S'} V_2'$. 
Then we say that $M'$ is obtained by \textit{$(r/s)^{\ast}$-surgery} on $K$. 
We note that $(r/s)^{\ast}$-surgery is longitudinal if and only if $r/s$ is an integer. 

\subsection{The fundamental group}
Since $K(L(p,q);u)$ is a $(1,1)$-knot in $L(p,q)$, particularly is a so-called $1$-bridge braid, 
we can easily obtain a presentation of the fundamental group of a surgered manifold as follows. 

\begin{proposition}[{\cite[Theorem 5.1]{Saito3}}]
Set $K=K(L(p,q);u)$ and let $\{s_j\}_{1\le j\le p}$ be the 
basic sequence for $(p,q)$. Let $N'$ be the $3$-manifold obtained 
by $r^{\ast}$-surgery on $K$, where $r$ be an integer. Then 
we have:

\begin{center}
$\pi_1 (N')\cong \Bigg\langle a,b\ 
\Bigg|\ 
\displaystyle\prod_{j=1}^{\Psi_{p,q} (u)} W_1(j) =1,\ 
\displaystyle\prod_{j=1}^{p} W_2(j) =1 \Bigg\rangle$,  
\end{center}

where 

\begin{center}
$W_1 (j)=\left\{
     \begin{array}{cl}
     a    & \mathrm{if}\ \, s_j >u\\
     ab^r & \mathrm{if}\ \, s_j=u\\
     ab   & \mathrm{otherwise}
     \end{array}
\right.$ 
and
$W_2 (j)=\left\{
     \begin{array}{cl}
     a    & \mathrm{if}\ \, s_j \ge u\\
     ab   & \mathrm{otherwise}
     \end{array}
\right.$. 
\end{center}
\end{proposition}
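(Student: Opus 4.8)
The plan is to read the presentation directly off the genus-two Heegaard splitting $(V_1'', V_2'; S')$ of $N'$ constructed in the appendix, using the standard fact that for a genus-two splitting $N' = V_1'' \cup_{S'} V_2'$ one has $\pi_1(N') \cong \pi_1(V_2') / \langle\langle\, \partial D,\ \partial D'\, \rangle\rangle$, where $D, D'$ are the two meridian disks of $V_1''$ and $\pi_1(V_2')$ is free of rank two. Thus the whole computation reduces to (i) fixing a free basis $a, b$ of $\pi_1(V_2')$ and (ii) expressing the boundaries of the two meridian disks of $V_1''$ as words in $a, b$.

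For step (i), I would take $a, b$ dual to the two meridian disks of the genus-two handlebody $V_2' = \mathrm{cl}(V_2 \setminus \eta(t_2^u; V_2))$: one disk is $D_2'$, coming from the meridian $\bar m = \partial D_2$ of $V_2$, and the other is a cocore disk of the drilled tunnel $\eta(t_2^u; V_2)$. For step (ii), I would use that the two meridian disks of $V_1''$ are (a) the surviving meridian disk $D_1$ of the original solid torus $V_1$, isotoped off $K$, and (b) the meridian disk $\bar D$ of the attached solid torus $\bar V$, whose boundary is the surgery slope $r[m^{\ast}] + [\ell^{\ast}]$ since we perform $r^{\ast}$-surgery. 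Reading $\partial D_1$ against $\partial D_2'$ should produce the length-$p$ relation $\prod_{j=1}^{p} W_2(j)$, this being the refinement of the usual lens-space relation $c^p = 1$ once the generator $c$ of $\pi_1(V_1)$ is split into $a, b$ by the tunnel; reading $\partial \bar D$ should produce the relation $\prod_{j=1}^{\Psi_{p,q}(u)} W_1(j)$, where the meridional part $m^{\ast}$ of the surgery slope contributes the factor $b^r$ and the longitudinal part $\ell^{\ast}$, built from the arc $t_1^u$ joining $P_0$ to $P_u$, contributes the remaining factors.

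The heart of the argument, and the step I expect to be the main obstacle, is the combinatorial bookkeeping that pins down the precise case distinctions defining $W_1$ and $W_2$. The key input is that $\bar m$ meets $m$ in the $p$ points $P_0, \dots, P_{p-1}$ and threads through them, along $\bar m$, in the cyclic order dictated by the basic sequence $s_j \equiv jq \pmod p$; hence traversing the boundary of a meridian disk of $V_1''$ and recording a letter at each crossing with $\partial D_2'$ produces a word whose $j$-th factor is governed by the position of $P_{s_j}$. Since the tunnel $\eta(t_2^u; V_2)$ is attached at the endpoints $P_0$ and $P_u$ of $t_2^u$, a crossing at $P_{s_j}$ should contribute $a$ or $ab$ according to which side of $u$ the index $s_j$ falls on, which is exactly the dichotomy $s_j \ge u$ versus $s_j < u$ in $W_2$ (and, together with the orientation of $\ell^{\ast}$, the trichotomy $s_j > u$, $s_j = u$, $s_j < u$ in $W_1$). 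The special factor $ab^r$ occurs precisely at the index $j$ with $s_j = u$, which is $j = \Psi_{p,q}(u)$ because $s_{\Psi_{p,q}(u)} \equiv \Psi_{p,q}(u)\,q \equiv u \pmod p$; this is where $\ell^{\ast}$ meets the surgery framing and the coefficient $r$ enters. Carefully checking orientations and the behavior of both curves near the handle $\eta(t_2^u; V_2)$, so that the letters, their order, and the single occurrence of $b^r$ come out exactly as stated, is where essentially all the work lies.
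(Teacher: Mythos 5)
First, a point of comparison: the paper itself contains no proof of this proposition --- it is imported verbatim from \cite[Theorem 5.1]{Saito3}, the appendix supplying only the setup (the splitting $(V_1',V_2';S')$ and the definition of $(r/s)^{\ast}$-surgery). Your overall framework --- present $N'$ by the genus-two splitting $V_1''\cup_{S'}V_2'$, take a free basis of $\pi_1$ of one handlebody as generators and the boundaries of a complete meridian disk system of the other as relators --- is the standard route and is the one the cited source follows. So the strategy is right; the problems are in the execution.

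The concrete gap is that your pairing of generators and relators is the wrong way around to produce the presentation \emph{as stated}, because the two traversal orders are interchanged. The points $P_0,P_1,\dots,P_{p-1}$ occur in subscript order along $m=\partial D_1$, while the basic-sequence order $P_{s_1},P_{s_2},\dots$ is the order in which they occur along $\bar m=\partial D_2$ (consecutive labels jump by $q$). You propose to traverse the disk boundaries of $V_1''$ (the pushed-off copy of $D_1$ and the surgery disk) and record crossings with $\partial D_2'$; such a traversal meets the crossing points in subscript order, so the ``hence'' in your third paragraph is a non sequitur: your length-$p$ relator would be indexed along $m$, with the dichotomy governed by positions along $\bar m$ --- effectively the presentation with $q$ and $q^{-1}\ (\bmod\ p)$ interchanged, an equivalent but different presentation from the stated one, whose $j$-th letter is governed by $s_j\ge u$ versus $s_j<u$. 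To land exactly on $W_1,W_2$ you must pair the other way: take $a,b$ dual to the disk system of $V_1''$ ($a$ dual to a copy of $D_1$ pushed off $t_1^u$, $b$ dual to the surgery disk), and take as relators the boundaries of a disk system of $V_2'$, namely (i) a parallel copy of $D_2$ pushed off the tunnel, whose boundary meets the $a$-disk at $P_{s_1},\dots,P_{s_p}$ in this order and picks up a $b$ exactly after those $P_{s_j}$ with $s_j<u$ (where it crosses the pushed-off $t_1^u$), giving $\prod_{j=1}^{p}W_2(j)$; and (ii) the half-disk $D_2'$ cut off from $D_2$ by $t_2^u$, whose boundary runs through $P_{s_1},\dots,P_{s_{\Psi_{p,q}(u)-1}}$ and then over the tunnel, where it crosses the $r$ meridional strands of the surgery disk's boundary --- this is what truncates the product at $j=\Psi_{p,q}(u)$ and produces the single letter $ab^{r}$ there. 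Two further slips: $\bar D$ itself is not a meridian disk of $V_1''$ (its boundary lies on $\partial\eta(K;V_1')$, not on $S'$), so it must first be completed by the annulus between $\ell^{\ast}$ and $\ell'$; and since the entire content of the proposition is the exact form of $W_1$ and $W_2$, declaring that derivation to be deferred ``bookkeeping'' leaves the proposition unproved. Once set up with the correct pairing, the computation does check out against the paper's own sample calculation for $K(L(64,23);19)$ with $r=1$, which is a useful test you could have run on your version: it would have revealed the reversed indexing.
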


\subsection{Hyperbolicity}
Though $K=K(L(p,q);u)$ admits several representation (\textit{cf}. \cite[Proposition 4.5]{Saito2}), 
it is proven in that $\widetilde{\Phi}_{p,q} (u)$ is an invariant for $K$ if $K$ 
admits a longitudinal surgery yielding $S^3$ (\textit{cf}. \cite[Corollary 4.6]{Saito2}). 
Hence when  $K$ admits a longitudinal surgery yielding $S^3$, $\widetilde{\Phi}_{p,q}(u)$ 
is denoted by $\Phi (K)$.  Moreover, we have a necessary and sufficient condition for such knots 
to be hyperbolic. 

\begin{proposition}[{\cite[Theorem 1.3]{Saito2}}]
Set $K=K(L(p,q);u)$. Suppose that $K$ admits a longitudinal surgery yielding $S^3$. 
Then we have the following: 

\begin{enumerate}
\item $\Phi (K)=0$ if and only if $K$ is a torus knot. 
\item $\Phi (K)=1$ if and only if $K$ contains an essential torus in its exterior. 
\item $\Phi (K)\geq 2$ if and only if $K$ is a hyperbolic knot. 
\end{enumerate}
\end{proposition}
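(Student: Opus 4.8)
The plan is to exploit surgery duality together with geometrization, and only afterwards to match each geometric type against the combinatorial invariant $\Phi(K)$. Since $K = K(L(p,q);u)$ is assumed to admit a longitudinal surgery yielding $S^3$, I would first observe that its exterior $E(K)$ coincides with the exterior of a knot $K^{\ast}\subset S^3$ that admits a lens space surgery (take $K^{\ast}$ to be the core of the surgery solid torus; the two exteriors are glued to the same torus). As $p>0$, the space $L(p,q)$ is irreducible, so $E(K)$ is irreducible, and one checks that the relevant knots are not cores of Heegaard solid tori, so $E(K)$ is not a solid torus. By the Geometrization Theorem (Thurston for Haken manifolds, Perelman in general), such an exterior falls into exactly one of the three mutually exclusive classes: Seifert fibered and atoroidal, toroidal (containing an essential torus), or hyperbolic. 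The first is equivalent to $K$ being a torus knot, the second to $K$ having an essential torus in its exterior, the third to $K$ being hyperbolic. Hence it suffices to establish the two equivalences $\Phi(K)=0 \iff K$ is a torus knot and $\Phi(K)=1 \iff E(K)$ contains an essential torus; statement (3) then follows by elimination.

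For the torus-knot case I would argue both directions through the explicit $1$-bridge-braid model and the fundamental-group presentation recorded above. In the forward direction, when $\Phi(K)=\widetilde{\Phi}_{p,q}(u)=0$ one of the four competing quantities in $\widetilde{\Phi}_{p,q}(u)$ vanishes, and I would show this degeneracy forces the word $\prod W_1(j)$ to simplify so that $K$ can be isotoped onto $\partial V_1$, exhibiting the Heegaard torus on which $K$ lies. For the converse I would invoke the classification of torus knots with lens space surgeries (Moser) applied to the dual knot $K^{\ast}$, read off the corresponding parameters $(p,q,u)$ explicitly, and compute $\widetilde{\Phi}_{p,q}(u)=0$ on that family; the same arithmetic of the basic sequence $\{s_j\}$ should rule out $\Phi(K)=0$ for every non-torus $K$.

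The toroidal case I would handle by the analogous dictionary, now using the classification of \emph{satellite} knots admitting lens space surgeries: by Bleiler--Litherland and Wang (see also Wu), the only such satellites are cables of torus knots, whose exteriors carry a unique essential torus, the cabling torus. I would identify the parameter family realizing these cables via $K^{\ast}$ and verify that $\widetilde{\Phi}_{p,q}(u)=1$ precisely on it, while conversely $\Phi(K)=1$ produces the cabling (swallow--follow) torus. The main obstacle throughout will be exactly this combinatorial-to-geometric dictionary: extracting an embedded essential surface (the Heegaard torus when $\Phi=0$, the cabling torus when $\Phi=1$) from the arithmetic of $\{s_j\}$, and in the reverse direction pinning down the finitely many parameter families coming from Moser's and Berge's lists and evaluating $\widetilde{\Phi}_{p,q}(u)$ on each. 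Once these two equivalences are secured, hyperbolicity for $\Phi(K)\geq 2$ is immediate from the geometrization trichotomy.
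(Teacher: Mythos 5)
The first thing to note is that the paper contains no proof of this proposition: it is quoted verbatim from \cite[Theorem 1.3]{Saito2}, and the citation is the paper's entire justification. So the comparison is really between your outline and the argument in that reference, which your outline does resemble in its skeleton: surgery duality with a knot $K^{\ast}\subset S^3$ admitting a lens space surgery, the classification results for torus knots (Moser) and satellite knots (Bleiler--Litherland, Wang, Wu) with lens space surgeries, and the geometrization trichotomy for the exterior, so that (3) follows from (1) and (2) by elimination. That reduction is sound as far as it goes.

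The genuine gap is that both equivalences (1) and (2) are then left entirely as declarations of intent, and they are precisely where the content of the theorem lies. What must be proved is the dictionary between the arithmetic of the basic sequence and the geometry: that on the torus-knot family the dual parameters satisfy $\widetilde{\Phi}_{p,q}(u)=0$, on the cabled family $\widetilde{\Phi}_{p,q}(u)=1$, and conversely that $\widetilde{\Phi}_{p,q}(u)\le 1$ forces membership in those families. Your one concrete suggestion for the forward direction --- that $\widetilde{\Phi}_{p,q}(u)=0$ forces the relator $\prod W_1(j)$ to simplify so that $K$ can be isotoped onto the Heegaard torus --- is an assertion, not an argument: $\widetilde{\Phi}_{p,q}(u)$ is a minimum of four different quantities, so its vanishing splits into four cases, each of which must separately be converted into an isotopy of the $1$-bridge presentation; nothing in your proposal indicates how. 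Two further points are glossed over. First, the plan of ``evaluating $\widetilde{\Phi}_{p,q}(u)$ on each parameter family'' implicitly uses that $\widetilde{\Phi}_{p,q}(u)$ is an invariant of $K$, independent of its several representations $(p,q,u)$; this is itself a nontrivial result (\cite[Corollary 4.6]{Saito2}) that your argument must either cite or reprove. Second, you dismiss the possibility that $E(K)$ is a solid torus with ``one checks that the relevant knots are not cores of Heegaard solid tori,'' but no such check is given; since the unknot in $S^3$ has lens space surgeries and its dual is exactly such a core, this case must either be ruled out by an actual argument about the knots $K(L(p,q);u)$ with longitudinal $S^3$ surgeries, or else absorbed into case (1) (a core lies on the Heegaard torus, so it is a torus knot and statement (1) requires $\Phi(K)=0$ for it), rather than excluded from the trichotomy at the outset.
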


\subsection{Example}
Set $K=K(L(64,23);19)$. The basic sequence for $(64,23)$ is: 

\noindent
$\{s_j\}_{1\leq j\leq 64}:\ 23,46,5,28,51,10,33,56,15,38,61,20,43,2,25,48,7,30,53,12,35,58,\\
\hspace{2.045cm} 17,40,63,22,45,4,27,50,9,32,55,14,37,60,19,42,1,24,47,6,29,52,\\
\hspace{2.045cm} 11,34,57,16,39,62,21,44,3,26,49,8,31,54,13,36,59,18,41,0.$

Let $N'$ be the $3$-manifold obtained by $1^{\ast}$-surgery on $K$. Then we have:

\[
\pi_1 (N') 
\cong \bigg\langle a,b\, 
\left|
\begin{array}{l}
(a^3b)^3a^5b(a^3b)^3a^5b(a^3b)^3,\\
(a^3b)^3a^5b(a^3b)^3a^5b(a^3b)^2a^5b(a^3b)^3a^5b(a^3b)^3a^2b
\end{array}
\right\rangle.
\]

Repeating word reduction, we see that $\pi_1 (N')$ is trivial. This implies that 
$N'\cong S^3$ since Geometrization Conjecture is true \cite{P1,P2,P3}. 
Moreover, $K$ is hyperbolic since $\Phi(K)=8$.


\end{document}